\documentclass[11pt,twoside]{amsart}
\usepackage{pgfplots}
\pgfplotsset{compat=1.15}
\usepackage{mathrsfs}
\usetikzlibrary{arrows}
\usepackage{amsmath, amssymb, amsfonts, amsthm,latexsym,,mathtools,,hyperref,fancyhdr,comment, tabularht, tabularx, longtable, mathrsfs, cleveref,booktabs, color, geometry,bm}
\usepackage{enumitem}
\usepackage{multicol}
\usepackage{hyperref,url}
\usepackage{tikz}
\tikzstyle{vertex}=[circle, draw, inner sep=0pt, minimum size=6pt,fill=black]

\long\def\symbolfootnote[#1]#2{\begingroup%
	\def\thefootnote{\fnsymbol{footnote}}\footnote[#1]{#2}\endgroup}

\allowdisplaybreaks
\def \N {{\mathbb{N}}}
\def \Z {{\mathbb{Z}}}

\newtheorem*{theorem*}{Theorem}
\newtheorem{theorem}{Theorem}[section]

\newtheorem{lemma}[theorem]{Lemma}

\newtheorem*{ex*}{Example}

\date{}

\begin{document}
\title{Groups Having 13 Cyclic Subgroups}
	\author{Khyati Sharma}
	\address{Indian Institute of Science Education and Research, Berhampur, Odisha, India, 760003.}
	\email{khyatisharma0907@gmail.com}

	\subjclass[2020]{20D60, 20D25 and 20D20}
	\today 
	\keywords{n-cyclic group, number of cyclic subgroups, Sylow theorem, maximal subgroup, Nilpotent group.}

\begin{abstract}
   In [{\em Finite groups with a small number of cyclic subgroups, Czechoslovak Mathematical Journal 75.3 (2025): 839-851}], the authors classified all the groups having $13$ cyclic subgroups. In this paper, we give a new proof for the classification of all finite groups having exactly $13$ cyclic subgroups.
\end{abstract}

\maketitle
\section{Introduction}
 The problem of counting cyclic subgroups of a finite group has been studied actively over the last couple of decades. Miller was the first one to give a result in this direction. In \cite{miller1929number}, Miller proved that if $G$ is a finite group, then
    \begin{equation}\label{eq1}
|G|=\sum\limits_{m||G|}c(m)\varphi (m),
\end{equation}
where $c(m)$ denotes the number of cyclic subgroups of order $m$ in $G$, and $\varphi$ is the Euler totient function. If $G$ is a group, then $c(G)$ denotes the number of cyclic subgroups of $G$. A finite group $G$ is $n$-cyclic if $c(G)=n$.

Richard's Theorem~\cite{richards1984remark} provides a lower bound on $c(G)$, in particular $c(G)\geq d(|G|)$, where $d(|G|)$ denotes the number of positive divisors of $|G|$ and the equality holds if and only if $G$ is cyclic. Zohu~\cite{zhou2016finite}, Kalra~\cite{kalra2019finite}, Ashrafi and Haghi~\cite{ashrafi2019n} classified all the $n$-cyclic groups for $3\leq n\leq 10$. In our earlier papers~\cite{11cyclic,sharma2025groups}, we classified all $11$ and $12$-cyclic groups. Recently, Liu et al.~\cite{13-Cyclic} classified all $13$-cyclic groups. In this article, we give an alternative proof for the classification of all $13$-cyclic groups. We have divided the proof into two parts, depending on when $G$ is a nilpotent or a non-nilpotent group. Then, with the help of many equivalent conditions given in~\cite[Chapter 10]{rose2009course}, for a group to be nilpotent, we give the classification of all $13$-cyclic groups.
\section{Notations}
The notations $\Z_n$, $D_{2n}$, $Q_{2^n}$, $S_n$, $A_n$, $SL(2,3)$ and $Dic_{n}$ denote the cyclic group of order $n$, dihedral group of order $2n$, generalized quaternion group of order $2^n$, symmetric group with $n$ symbols, alternating group with $n$ symbols, special linear group of order $24$ and dicyclic group of order $4n$, respectively. Throughout this paper, $p, q$ and $r$ are distinct prime numbers, and the Euler totient function is denoted by $\varphi$. Here, $\N$ denotes the set of natural numbers and does not contain zero. The number of Sylow $p$-subgroups of $G$ is denoted by $n_p(G)$. A group $G$ is {\em Dedekind} if every subgroup of $G$ is normal. A non-cyclic group $G$ is said to be {\em minimal non-cyclic} if all its proper subgroups are cyclic.\\
Let $d(n), \omega(n)$ denote the number of positive divisors and the number of distinct prime divisors of $n$, respectively. In this paper, all the calculations for small groups are done by using GAP~\cite{GAP4} and the {\em SmallGroup$(n, i)$} denotes the $i^{th}$ group of order $n$ in the {\em Small Group Library} of GAP.
\section{Classification of 13-cyclic groups}\label{sec:13cyclicgroups}
  Suppose $G$ is a $13$-cyclic group of order $n$. An immediate consequence of Richard's Theorem~\cite{richards1984remark} is that $\omega(n)\le 3$ and $n$ is one of the form $p^k, pq, p^2q, p^3q, p^4q, p^5q, p^2q^2, p^3q^2, pqr$ or $p^2qr$, where $k\leq {12}$. Now we classify all $13$-cyclic groups. We divide the proof into two parts, depending on when $G$ is nilpotent or non-nilpotent. The following lemma is an immediate consequence of nilpotent groups.  
 \begin{lemma}\label{Nil}
     Let $G$ be a nilpotent group with $c(G)$ be a prime number. Then $G$ is a $p$-group.
 \end{lemma}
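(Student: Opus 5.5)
A finite nilpotent group is the direct product of its Sylow subgroups, $G = P_1 \times P_2 \times \cdots \times P_t$, where $P_i$ is the Sylow $p_i$-subgroup for the distinct primes $p_1,\dots,p_t$ dividing $|G|$. The plan is to show that $c$ is multiplicative over this decomposition,
\[
c(G) = c(P_1)\,c(P_2)\cdots c(P_t),
\]
and then to use the primality of $c(G)$ to force $t \le 1$.

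\textbf{Multiplicativity.} A subgroup $H$ of a direct product of groups of pairwise coprime orders splits as $H = (H\cap P_1)\times\cdots\times(H\cap P_t)$. To see this, take $h=(h_1,\dots,h_t)\in H$. A suitable power of $h$, with exponent chosen by the Chinese remainder theorem, equals $(1,\dots,1,h_i,1,\dots,1)$, so this element lies in $H\cap P_i$.

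Moreover, $H$ is cyclic if and only if each factor $H\cap P_i$ is cyclic, because a direct product of cyclic groups of pairwise coprime orders is cyclic. Hence cyclic subgroups of $G$ correspond bijectively to tuples $(C_1,\dots,C_t)$ with $C_i$ a cyclic subgroup of $P_i$, which gives the product formula.

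\textbf{Conclusion.} Each $P_i$ with $p_i$ dividing $|G|$ is nontrivial, so it contains both the trivial subgroup and some nontrivial cyclic subgroup. Therefore $c(P_i)\ge 2$ for every $i$. If $t\ge 2$, then $c(G)$ is a product of at least two integers each at least $2$, so it is composite, contradicting primality. Hence $t\le 1$, and $G$ is a $p$-group (the trivial group has $c=1$, which is not prime, so $t=1$).

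The only step needing care is the splitting of subgroups via coprime orders. It is standard, so I do not expect any real obstacle.
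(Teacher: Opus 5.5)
Your proof is correct and takes the same approach as the paper: write $G$ as the direct product of its Sylow subgroups, use $c(G)=c(P_1)\cdots c(P_t)$, and note that primality of $c(G)$ leaves room for only one nontrivial factor. The paper gets the product formula by citing a proposition from the literature (Proposition~2.2 of its reference on cyclicity degree), whereas you prove it directly by splitting each subgroup with the Chinese remainder theorem.
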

 \begin{proof}
     The proof is immediate by using the fact that a nilpotent group is the direct product of its Sylow subgroups and by using~\cite[Proposition 2.2]{CyclicityDegree}.
 \end{proof}
 \begin{lemma}
     If $G$ is a nilpotent group with $c(G)=13$, then $G\cong \Z_{p^{12}}, \Z_{11}\times \Z_{11}$ and $Q_{32}$.
 \end{lemma}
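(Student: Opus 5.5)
By Lemma~\ref{Nil}, since $13$ is prime, $G$ is a $p$-group, say $|G|=p^k$. By Richard's Theorem $k+1=d(p^k)\le 13$, so $k\le 12$, and equality holds exactly when $G\cong\Z_{p^{12}}$. So from now on I assume $G$ is a non-cyclic $p$-group and aim to show $G\cong\Z_{11}\times\Z_{11}$ or $G\cong Q_{32}$.

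The main tool is to count cyclic subgroups by order. For $p$ odd, a non-cyclic $p$-group has at least $p+1$ subgroups of order $p$ (number of subgroups of order $p$ is $\equiv 1 \pmod p$ and not $1$), together with the trivial subgroup. This gives $c(G)\ge p+2$, so $p\le 11$.

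\textbf{Odd $p$.}
\begin{itemize}
\item If $|G|=p^2$, then $G\cong\Z_p\times\Z_p$ and $c(G)=p+2$. This equals $13$ exactly when $p=11$.
\item If $|G|\ge p^3$ and $G$ is non-cyclic, I would argue that $c(G)$ is larger:
  \begin{itemize}
  \item If $G$ has exponent $p$, then $c(G)=1+(|G|-1)/(p-1)\ge p^2+p+2>13$ for $p\ge 3$.
  \item Otherwise there are elements of order $p^2$. Miller's formula~\eqref{eq1}, combined with the structure of non-cyclic $p$-groups of order $p^3$ and up (using $\Omega_1$ and the classification of $p$-groups with a cyclic maximal subgroup), shows $c(G)\ne 13$.
  \end{itemize}
  Concretely, I would enumerate the small cases $p\in\{3,5,7,11\}$ and the admissible values of $k$, using the constraint $c(G)=13$ in Miller's formula $p^k=\sum_{i} c(p^i)\varphi(p^i)$ with $\sum_i c(p^i)=13$ and $c(p^i)\equiv 1\pmod p$ for $i\ge 1$ in the odd case. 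This congruence forces $c(p)\ge p+1$, and for $p\ge 5$ it leaves very little room. I would rule the cases out one by one, checking by GAP where convenient.
\end{itemize}

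\textbf{$p=2$.} This is the main obstacle, since there are many $2$-groups.
\begin{itemize}
\item First, $k\le 12$ gives a crude bound. Then I would use that the number of cyclic subgroups of order $2^i$ ($i\ge 2$) is even unless $G$ is cyclic or of maximal class, a standard consequence of Kulakoff/Berkovich-type congruences.
\item If $G$ has a unique involution, $G$ is generalized quaternion $Q_{2^n}$. Here $c(Q_{2^n})=n+2^{n-2}$, which equals $13$ exactly when $n=5$, giving $Q_{32}$.
\item If $G$ has several involutions, I would bound $c(G)$ from below by the number of subgroups of order $2$ plus the cyclic subgroups of higher orders. I would then show through the maximal class families and the abelian/small-order cases (orders up to $2^6$, verified by GAP) that $13$ is never attained.
\item Groups of larger order would be excluded because $|G|=\sum c(2^i)\varphi(2^i)$ with $13$ terms forces the exponent, hence a cyclic subgroup of index at most $2^{\text{small}}$. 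That reduces $G$ to the known lists of $2$-groups with a cyclic subgroup of small index, whose cyclic subgroup counts are computable.
\end{itemize}

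I expect this $2$-group case analysis, especially ruling out $D_{2^n}$, $SD_{2^n}$, $\Z_{2^{n-1}}\times\Z_2$, $M_{2^n}$ and their relatives, to take the most care.
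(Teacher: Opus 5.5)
Your reduction to a $p$-group, the cyclic case via Richards' theorem, the bound $p\le 11$ for odd $p$, and the count $c(Q_{2^n})=n+2^{n-2}$ are all fine. But the two cases that carry the real content are not proved.

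For odd $p$, the congruence you plan to feed into Miller's formula is false. In a non-cyclic $p$-group with $p$ odd one has $c(p)\equiv 1\pmod p$, but $c(p^i)\equiv 0\pmod p$ for $i\ge 2$. This is a classical congruence, the odd-$p$ analogue of the one you quote for $p=2$. For instance, $\Z_9\times\Z_3$ has exactly three cyclic subgroups of order $9$. Apart from that, ``Miller's formula, combined with the structure \dots\ shows $c(G)\ne 13$'' is an assertion, not an argument.

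``Checking by GAP'' is also unavailable for orders like $3^{11}$ or $2^{12}$ until a structural reduction has been proved, and you only sketch one. For $p=2$, Miller's formula alone gives only a cyclic subgroup of index at most $4$, and that classification is long. The $2$-group case with several involutions is likewise left as a programme rather than an argument.

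The missing idea is that these congruences determine $c(G)$ modulo $p$ directly.

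\emph{Odd $p$.} For non-cyclic $G$, $c(G)=1+c(p)+\sum_{i\ge 2}c(p^i)\equiv 2\pmod p$. So $c(G)=13$ forces $p\mid 11$, i.e.\ $p=11$. Then $c(11)\ge 12$ forces $c(11)=12$ and $c(11^i)=0$ for $i\ge 2$. By Miller's formula $|G|=1+12\cdot 10=121$, so $G\cong\Z_{11}\times\Z_{11}$.

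\emph{$p=2$.} The number of involutions is odd. By the very congruence you quote, $\sum_{i\ge 2}c(2^i)$ is even when $G$ is neither cyclic nor of maximal class, so then $c(G)$ is even and cannot equal $13$. You never draw this conclusion, which is why you expect to rule out $\Z_{2^{n-1}}\times\Z_2$, $M_{2^n}$ and their relatives by hand. Hence $G$ is of maximal class. The counts $c(\Z_2\times\Z_2)=4$, $c(D_{2^n})=n+2^{n-1}$, $c(SD_{2^n})=n+3\cdot 2^{n-3}$ and $c(Q_{2^n})=n+2^{n-2}$ show that only $Q_{32}$ occurs.

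Completed this way, your route is self-contained and needs no computer checks. The paper's proof gives no argument of its own here; it defers to the case analysis (cyclic versus non-cyclic maximal subgroup) in Lemma~3.2 of the author's earlier paper.
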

 \begin{proof}
    Since $c(G)=13$, then by Lemma~\ref{Nil}, $G$ is a $p$-group. Let $|G|=p^a$ and $M$ be a maximal subgroup of $G$. First, suppose that $M$ is cyclic. Then by following the proof of~\cite[Lemma 3.2] {sharma2025groups}, $G\cong \Z_{p^{12}}, \Z_{11}\times \Z_{11}$ and $Q_{32}$. Further, assume that $M$ is non-cyclic, then also by following the proof of~\cite[Lemma 3.2] {sharma2025groups}, we get $G\cong Q_{32}$. Hence, the result holds.
 \end{proof}
\begin{lemma}
    Let $G$ be a non-nilpotent group such that $c(G)=13$ and $|G|=p^aq$, where $a=1$ or $2$. Then $G\cong D_{22}, \Z_{11}\rtimes \Z_{5}={ SmallGroup(55, 1)}$.
\end{lemma}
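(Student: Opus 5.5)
Throughout, $G$ is non-nilpotent of order $p^aq$ with $a\in\{1,2\}$ and $c(G)=13$. The plan is a direct count of cyclic subgroups via Sylow theory. Non-nilpotency gives some Sylow subgroup that is not normal. I first use Burnside's normal $p$-complement theorem and the Sylow theorems to show that one Sylow subgroup is normal. For $|G|=pq$, or $|G|=p^2q$ with $p>q$, this is immediate. For $|G|=p^2q$ with $p<q$, the only exception is $n_q=p^2$ with $q\mid p^2-1$, which forces $p=2$, $q=3$ and $G\cong A_4$. Since $c(A_4)=8\neq 13$, this case is excluded. Hence $G=N\rtimes H$ with $N$ normal Sylow and $H$ a non-normal Sylow subgroup. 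The number of conjugates of $H$ is then $n_H=|N|/|N_N(H)|>1$.

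\textbf{Case $|G|=pq$, $p>q$.} Here $G=\Z_p\rtimes\Z_q$ with $q\mid p-1$ and $n_q=p$. The cyclic subgroups are $1$, the unique subgroup of order $p$, and the $p$ subgroups of order $q$. There are no elements of order $pq$, so
\[ c(G)=p+2. \]
Setting $p+2=13$ gives $p=11$ and $q\in\{2,5\}$, which yields $D_{22}$ and $\Z_{11}\rtimes\Z_5={\rm SmallGroup}(55,1)$.

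\textbf{Case $|G|=p^2q$.} I split according to which Sylow subgroup is normal, and count using Miller's formula \eqref{eq1} or directly, organised by element orders.

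(i) The Sylow $p$-subgroup $P$ is normal and $Q=\Z_q$ acts nontrivially. Let $C=C_P(Q)$, of order $1$ or $p$. Then $n_q=p^2/|C|$, and the cyclic subgroups of order $q$ number $n_q$. The subgroups of $P$ contribute $c(P)$, which is $3$ if $P\cong\Z_{p^2}$ and $p+3$ if $P\cong\Z_p^2$. If $|C|=p$, each $C\times Q$ adds cyclic subgroups of order $pq$, giving $n_q=p$ of them.

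This leaves a small number of formulas to solve against $13$:
\[ 3+p^2 \ \text{or}\ 3+p+p \quad (P \text{ cyclic}), \qquad p+3+p^2 \ \text{or}\ p+3+2p \quad (P \text{ elementary abelian}). \]
Checking each, $3+p^2=13$ and $2p+3=13$ have no prime solutions (they give $p^2=10$ and $p=5$ respectively). For the second, $p=5$ would need $q\mid 4$, so $q=2$, with $P=\Z_{25}$ and $C$ of order $5$; but an automorphism of order $2$ of $\Z_{25}$ fixing the order-$5$ subgroup pointwise does not exist, since the fixed subgroup of inversion is trivial. Similarly, $p^2+p+3=13$ has no solution, and $3p+3=13$ has none either. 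I would verify each small candidate with GAP.

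(ii) The Sylow $q$-subgroup is normal and $P$ acts nontrivially. Let $K$ be the kernel of the action, so $|K|\in\{1,p\}$, and $n_p=q$. The cyclic subgroups are:
\begin{itemize}
\item $1$ and $\Z_q$;
\item the $p$-subgroups, namely $q$ copies of $P$ overlapping in $K$;
\item cyclic subgroups of order $pq$ and $p^2q$ coming from $K\times\Z_q$.
\end{itemize}
For example, $\Z_q\rtimes\Z_{p^2}$ with kernel of order $p$ has $c=1+1+1+q+2$ from the orders $1,q,p,p^2,pq$, i.e.\ $q+5=13$, so $q=8$, which is not prime. I would similarly enumerate the kernel-trivial case with $P\cong\Z_p^2$ (where $K\neq1$ necessarily, since $\operatorname{Aut}(\Z_q)$ is cyclic) and the case $P\cong\Z_{p^2}$ acting faithfully. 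In each, the count is linear in $q$, or of the form $q(p+1)+\dots$, and I solve for primes.

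The main obstacle is the bookkeeping in case (ii), together with the $P$ elementary abelian subcase of (i). There one must correctly count the $p$-subgroups shared among the conjugates of $P$, and the mixed-order cyclic subgroups. I would handle this by partitioning the elements by order and dividing each count by $\varphi(m)$, as in \eqref{eq1}. The expected outcome is that only the $pq$ case with $p=11$ survives.
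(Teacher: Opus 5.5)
Your route differs from the paper's. The paper does no counting here: it quotes Kalra's Lemma~3.1 for $|G|=pq$ and the list of possible values of $c(G)$ from Kalra's Proposition~3.2 for $|G|=p^2q$, then runs GAP on orders $50$ and $75$ (these come from the listed value $2p+3$). A self-contained Sylow/Frobenius count is a good alternative. Your reduction to $G=N\rtimes H$ (with $A_4$ the only exception when $p<q$), the $pq$ case, and the cyclic-$P$ part of (i) are fine. The case of cyclic $P$ with $|C|=p$ is in fact impossible for every $p$, not only $p=5$: a nontrivial automorphism of $\Z_{p^2}$ of order prime to $p$ is multiplication by a unit $u\not\equiv 1\pmod p$, hence fixed-point-free. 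So, done correctly, your approach makes the paper's GAP check unnecessary.

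As written, however, the $p^2q$ part is not yet a proof.

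(a) $c(\Z_p\times\Z_p)=p+2$, not $p+3$. The elementary abelian counts in (i) are therefore $p^2+p+2$ when $C=1$ and $3p+2$ when $|C|=p$. Your own value $c(A_4)=8$ is the case $p=2$ of $p^2+p+2$, whereas your formula gives $9$. Likewise $c(\Z_3\times S_3)=11=3p+2$.

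(b) In (ii) with $P\cong\Z_{p^2}$ and $|K|=p$, the subgroup $K$ is central and is the unique subgroup of order $p$ in $G$. So $K\times\Z_q$ is the only cyclic subgroup of order $pq$, and $c(G)=q+4$, not $q+5$ (e.g.\ $c(\Z_3\rtimes\Z_4)=7$).

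(c) The remaining subcases of (ii) are only promised, so here they are.
\begin{itemize}
\item $P\cong\Z_{p^2}$ acting faithfully, so $p^2\mid q-1$. Then $G$ is Frobenius with kernel $\Z_q$, and its $q$ complements meet pairwise trivially, giving $c(G)=2q+2$.
\item $P\cong\Z_p\times\Z_p$. Then $G\cong\Z_p\times(\Z_q\rtimes\Z_p)$. The $q$ Sylow $p$-subgroups pairwise meet in the central $K$, giving $pq+1$ subgroups of order $p$ and $c(G)=pq+4$.
\end{itemize}

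After these corrections, none of $p^2+3$, $p^2+p+2$, $3p+2$, $q+4$, $2q+2$, $pq+4$ equals $13$ for distinct primes, so your conclusion survives. But the errors in (a) and (b) only happen not to affect the outcome, and the argument is complete only once (c) is actually carried out.
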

\begin{proof}
 If $|G|=pq$ and $p<q$, then by \cite[Lemma 3.1]{kalra2019finite}, $c(G)=q+2$. Therefore $q=11$ and $G\cong D_{22},~ \Z_{11}\rtimes \Z_{5}$.\\
If $|G|=p^2q$, there are the following two sub-cases:
 \begin{enumerate}
        \item If $p<q$, then according to Proposition~$3.2$ of \cite{kalra2019finite}, $c(G)\in \{6, 2p+4, pq+4, q+4, 2q+2\}$. It is easy to see that there are no prime numbers $p$ and $q$ such that $c(G)=13$. 
        \item If $p>q$, then by Proposition~$3.2$ of \cite{kalra2019finite}, $c(G)\in \{6, 2p+4, p^2+3, p^2+p+2, 2p+3, 3p+2\}.$ Thus $c(G)=13$ is possible only if $p=5$ and $q=2$ or $3$ and $|G|=50, 75$. By using GAP~\cite{GAP4}, no such group of these orders is $13$-cyclic.
    \end{enumerate}
\end{proof}
 \begin{lemma}
     Let $G$ be a non-nilpotent group such that $c(G)=13$ and $|G|=p^{a}q$, where $a=3,4$ or $5$. Then $G\cong SL(2,3), \Z_7\rtimes \Z_8, \Z_7\rtimes \Z_{27}, \Z_5\rtimes \Z_{16}$ and $\Z_3\rtimes \Z_{32}$.
 \end{lemma}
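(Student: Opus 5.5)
We plan to split according to which Sylow subgroup is normal, and to count cyclic subgroups by order, together with Richards' bound $c(H)\ge d(|H|)$ for every subgroup $H$.

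\textbf{Step 1: a normal Sylow subgroup.} Let $|G|=p^aq$ with $a\in\{3,4,5\}$ and $G$ non-nilpotent. By Burnside's normal $p$-complement theorem or a direct Sylow count, the only groups of such order with no normal Sylow subgroup are $S_4$-like groups of order $24$. For these we check $c(G)$ directly; $S_4$ itself has $c(S_4)=17\neq 13$. Outside this exceptional case, either $P\in \mathrm{Syl}_p(G)$ is normal or $Q\in\mathrm{Syl}_q(G)$ is normal.

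\textbf{Step 2: $P$ normal, $Q=\Z_q$ acting nontrivially.} Here $n_q=p^b$ with $b\ge1$. The subgroups of order $q$ alone contribute $p^b$ cyclic subgroups. The subgroups of $P$ contribute $c(P)\ge d(p^a)=a+1$ more, and the trivial group is shared between the two counts, so
\[
13\ge c(P)+p^b-1 .
\]
This leaves few possibilities for $p$ and for $c(P)$. For each, we use the known classifications of $p$-groups with few cyclic subgroups (cyclic, generalized quaternion, and the small lists in \cite{zhou2016finite,kalra2019finite,ashrafi2019n}). We also add in the cyclic subgroups of order $p^iq$ coming from $C_P(Q)$.

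\begin{itemize}
\item \textbf{The case $P=Q_8$, $q=3$.} This gives $SL(2,3)$. Its count is $1+1+3+4+4=13$, from the orders $1,2,4,3,6$.
\item \textbf{Remaining choices.} The other choices overshoot $13$, or they lead to orders such as $40,56,\dots$ that GAP rules out.
\end{itemize}

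\textbf{Step 3: $Q$ normal, $P$ acting nontrivially.} Write $G=\Z_q\rtimes P$ with kernel $K=C_P(Q)$. Then $n_p=q$, and every element of order $q$ generates the unique subgroup $Q$. Suppose first that $P$ is cyclic. Then $G=\Z_q\rtimes\Z_{p^a}$ with kernel $\Z_{p^{a-j}}$, where $p^j\mid q-1$. We count:
\begin{itemize}
\item the subgroups of $Q\times K$, which is cyclic of order $qp^{a-j}$; these number $2(a-j+1)$;
\item the cyclic $p$-subgroups not contained in $K$, which number $q\cdot j$, one for each conjugate of $P$ and each layer above $K$.
\end{itemize}
So we expect
\[
c(G)=2(a-j+1)+qj .
\]
Solving $2(a-j+1)+qj=13$ with $p^j\mid q-1$ and $3\le a\le5$:
\begin{itemize}
\item $j=1$ forces $q$ odd and $q=11-2a+\dots$, after re-deriving the count carefully;
\item $j=1$, $q=7$, $p=2$, $a=3$ gives $\Z_7\rtimes\Z_8$;
\item $p=3$, $a=3$, $q=7$ gives $\Z_7\rtimes\Z_{27}$;
\item $p=2$, $a=4$, $q=5$ gives $\Z_5\rtimes\Z_{16}$;
\item $p=2$, $a=5$, $q=3$ gives $\Z_3\rtimes\Z_{32}$.
\end{itemize}
These match the statement; the exact formula will be verified against them.

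If $P$ is non-cyclic, then $c(P)\ge a+2$, and the $q$ conjugates of $P$ add many cyclic subgroups. We will show this forces $c(G)>13$ except in tiny orders, which are checked with GAP.

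\textbf{Main obstacle.} The delicate part is Step 3 with $P$ non-cyclic, and Step 2 with $P$ of order up to $p^5$. In both, counting cyclic subgroups across conjugates of $P$ requires controlling intersections. We will first try the crude bound $c(G)\ge c(P)+(q-1)\,\#\{\text{cyclic subgroups of }P\text{ not in }K\}$. We will then reduce any survivors to explicit orders, at most $96$ or so, and treat those with GAP.
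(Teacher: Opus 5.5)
\textbf{The gap is in Step 1.} It is not true that the only non-nilpotent groups of order $p^aq$ ($3\le a\le 5$) with no normal Sylow subgroup have order $24$. For example:
$GL(2,3)$, $\Z_2\times S_4$ and the binary octahedral group (order $48$), and $\Z_2\times GL(2,3)$ and $\Z_4\times S_4$ (order $96$), all have $n_2=3$ and $n_3=4$. Also $\Z_2^4\rtimes D_{10}$ of order $160$ (with $D_{10}\le \Gamma L(1,16)$) has $n_2=5$ and $n_5=16$. Steps 2 and 3 both assume a normal Sylow subgroup, so your case split drops all these groups.

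The conclusion survives, but it needs an argument. One option: if neither Sylow subgroup is normal, then $n_p=q$, so $p\mid q-1$. Also $n_q=p^b$ with $q\mid p^b-1$ and $1\le b<a$; if $b=a$, the $p^a(q-1)$ elements of order $q$ leave exactly $p^a$ elements, forcing $P\lhd G$. Counting the trivial subgroup, the $p^b$ subgroups of order $q$ and the at least $a$ nontrivial cyclic subgroups of a Sylow $p$-subgroup gives $13\ge 1+p^b+a$. Hence $p^b\le 9$, so $p=2$ and $(q,b)\in\{(3,2),(7,3)\}$.
\begin{itemize}
\item For $q=7$ you would need at most $4$ nontrivial cyclic $2$-subgroups. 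That forces every Sylow $2$-subgroup (of order at least $16$) to be cyclic and all of them to coincide, contradicting $n_2=7$.
\item What remains are the groups of order $24,48,96$ with $n_2=3$ and $n_3=4$, which must be checked directly.
\end{itemize}
The paper avoids this by splitting on whether the Sylow $q$-subgroup is normal. The ``neither normal'' groups then land in the branch where $n_q\ge q+1$ is bounded by counting.

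\textbf{Step 3, cyclic $P$.} This part is correct and cleaner than the paper's GAP search. The formula $c(\Z_q\rtimes\Z_{p^a})=2(a-j+1)+qj$ holds, and parity forces $j$ to be odd. Then $j=1$ gives $q=13-2a$ (not $11-2a+\cdots$), and $j\in\{3,5\}$ gives no solutions. So exactly the four metacyclic groups appear.

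\textbf{Non-cyclic $P$ in Steps 2 and 3.} These subcases are only announced, not proved.
\begin{itemize}
\item In Step 3 your crude bound does close the case. For $1\ne x\in Q$ one has $P\cap P^x=K$, so every cyclic subgroup of $P$ not contained in $K$ lies in a unique Sylow $p$-subgroup. A non-cyclic $P$ has exponent at most $p^{a-1}$, so there are at least $p$ such subgroups. Thus $13\ge c(P)+p(q-1)+1$ with $c(P)\ge 5$ and $p\mid q-1$, which forces $p=2$, $q=3$ and $|G|\in\{24,48,96\}$. You need to write this out.
\item In Step 2 you likewise need to actually list the pairs $(P,q)$ allowed by $c(G)\ge c(P)+p^b$, rather than appeal to unspecified GAP checks. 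This is the correct bound: there is no $-1$, since the subgroups of order $q$ are nontrivial.
\end{itemize}
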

 \begin{proof}
    First, assume that a Sylow $p$-subgroup of $G$ is cyclic. If the Sylow $q$-subgroup is normal in $G$, then Sylow $p$-subgroup is not normal in $G$ as $G$ is non-nilpotent. Thus, by Sylow theorem $n_p(G)\geq {1+p}$. Moreover, either $G$ has a unique subgroup of order $p$, or they are at least $1+p$. If the subgroup of order $p^k$ is unique, then $G$ has a unique cyclic subgroup of order $p^kq$. Also, $n_p(G)=q$. Therefore, by using the fact $c(G)=13$, we get $|G|\in \{2^3.3,2^3.5,2^3.7,3^3.2,3^3.5,3^3.7,5^3.2,
5^3.3,5^3.7,2^4.3,2^4.5,3^4.2,3^4.5,5^4.2,5^4.3,2^5.3\}$. If the subgroup of order $p$ is not unique, then we also get the same possibilities for $|G|$. We need to look only among non-nilpotent, supersolvable groups $G$ of the above orders. It can be checked using GAP, that $G\cong \Z_{7}\rtimes \Z_8={ SmallGroup(56, 1)}, \Z_{7}\rtimes \Z_{27}={ SmallGroup(189, 1)}, \Z_{5}\rtimes \Z_{16}={ SmallGroup(80, 1)}$ or $\Z_3\rtimes \Z_{32}={ SmallGroup(96, 1)}$. Now assume that Sylow $q$-subgroup is not normal in $G$. Then, by Sylow theorem $n_q(G)\geq {1+q}$ and $n_q(G)\geq p$. This implies that $p,q\leq 5$. Now, it can be checked by using GAP; no such group of these orders is $13$-cyclic.

         Now assume that Sylow $p$-subgroup of $G$ is not cyclic. So, there are the following two cases. First, suppose that the Sylow $q$-subgroup of $G$ is not normal. By \cite[Theorem 1.1]{LowerBoundNumberofCyclicSubgroups}, if $p^3\not =8$, then $c(P)\geq (a-1)p+2$ and $(a-1)p+2+q+1\leq c(G)=13$, where $(q+1)$, denotes cyclic subgroup of order $q$. This implies that $(a-1)p+q\leq 10$. If $p^3=8$, then by \cite[Theorem 1.1]{LowerBoundNumberofCyclicSubgroups}, $5+q+1\leq 13$ that is $q\leq 8$. Therefore $|G|\in \{2^3.3, 2^3.5, 2^3.7, 2^4.3, 2^4.5, 2^5.3, 3^3.2, 3^3.5, 3^4.2\}$. By using GAP, one can verify that $G\cong SL(2,3)$. Further assume that Sylow $q$-subgroup is normal in $G$. Then Sylow $p$-subgroup is not normal in $G$. Also, by \cite[Theorem 1.1]{LowerBoundNumberofCyclicSubgroups}, we get $(a-1)p+2+1=c(G)\leq 13$, where one denotes a unique subgroup of order $q$. Thus $p\leq 5$. Since the Sylow $q$-subgroup of $G$ is normal, then $G$ contains a maximal subgroup of order $p^{a-1}q$ say $M$, such that $c(M)\leq 11$ by~\cite[Proposition 2.4]{11cyclic}. Furthermore, $M\in \{\Z_{p^2q},\Z_{p^3q},\Z_{p^4q},\Z_3\times \Z_4,\Z_2\times \Z_{2q},A_4,\Z_5\rtimes\Z_4,\Z_3\times \Z_{3q},D_{12},Dic_7, \Z_7\rtimes \Z_9,\Z_3\times S_3,\Z_5\rtimes \Z_8, \Z_3\rtimes \Z_{16}, \Z_3\rtimes \Z_8, \Z_p\times Q_8\}$ by~\cite[Theorem 1.2, 2.4]{ashrafi2019n} and~\cite[Theorem 1.1]{11cyclic}. Now, we discuss all these cases separately. Let $M\cong \Z_{p^2q}$. Then $|G|=p^3q$ and $c(M)=6$. If $G$ contains a unique subgroup of order $p$, then the Sylow $p$-subgroup is isomorphic to $Q_8$. Since the Sylow $p$-subgroup is not unique, we can check that either $|G|=24, 40$ or $c(G)>13$. If the subgroup of order $p$ is not unique, then they are at least $1+p$, then $1+p+5\leq 13$. Thus $p=2,3,5$ or $7$. Also, a Sylow $p$-subgroup contains a cyclic subgroup of order $p^2$. By using GAP for each such choice of Sylow $p$-subgroup for $p=2,3,5$ and $7$, we can check that $c(G)\not =13$. Moreover, in a similar way for other choices of $M$, we can either find the values of $p$ and $q$, or we can explicitly count the number of cyclic subgroups of $G$, and they are more than $13$. Hence, no such group is $13$-cyclic. 
          \end{proof}
         \begin{lemma}
             There does not exist any non-nilpotent group $G$ such that $c(G)=13$ and $|G|=p^aq^2$ for $a=2,3$.
         \end{lemma}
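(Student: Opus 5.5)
We argue by contradiction: suppose $G$ is non-nilpotent with $c(G)=13$ and $|G|=p^aq^2$, $a\in\{2,3\}$. We first bound the primes from the cyclic-subgroup count and then settle the finitely many remaining orders with GAP, in the style of the previous lemmas. Let $P$ and $Q$ be a Sylow $p$-subgroup and a Sylow $q$-subgroup. Since $d(p^aq^2)=3(a+1)\in\{9,12\}$, Richard's Theorem~\cite{richards1984remark} leaves little room above the divisor count. Because $G$ is non-nilpotent, at least one of $P,Q$ is not normal. By symmetry of roles, we split according to which Sylow subgroup fails to be normal.

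Suppose $Q$ is not normal. Then $n_q(G)\ge 1+q$ and $n_q(G)\mid p^a$. First we would show that the Sylow $q$-subgroups contribute at least $1+q$ cyclic subgroups of order $q$ or $q^2$: if $Q$ is cyclic, the $n_q(G)$ cyclic subgroups of order $q^2$ are pairwise distinct, and if $Q\cong\Z_q\times\Z_q$, then $Q$ alone contains $q+1$ subgroups of order $q$. Next come the $p$-part and the identity. Here $c(P)\ge d(p^a)=a+1$, with the bound from \cite[Theorem 1.1]{LowerBoundNumberofCyclicSubgroups} when $P$ is non-cyclic. Together with at least one mixed cyclic subgroup of order $pq$ whenever some element of order $p$ centralizes an element of order $q$, this gives roughly $(q+1)+(a+1)\le 13$, and in the cyclic case $n_q(G)+\dots\le 13$. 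Since also $1+q\le n_q(G)\le p^a$, we get $q\le 11$ and $p$ small: $p^a\equiv 1$ or $p^{a}$-divisor $\equiv1\pmod q$ with the divisor at most $13$. The analogous bound holds when $P$ is not normal, now with $n_p(G)\ge 1+p$ dividing $q^2$ and $c(Q)\ge 3$. This leaves a finite list of orders, for example $36,72,100,108,150,196,200,225,\dots$, each below roughly $10^4$ and each lying in the Small Group Library.

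For sharper control, as in the preceding lemma, we would use that a group of order $p^aq^2$ is solvable by Burnside's theorem. Hence $G$ has a maximal subgroup $M$ of prime index, which is either $p$ or $q$. Then $c(M)\le 12$, and $c(M)\le 11$ under \cite[Proposition 2.4]{11cyclic}. We can therefore read off $M$ from \cite[Theorem 1.2, 2.4]{ashrafi2019n}, \cite[Theorem 1.1]{11cyclic} and the $12$-cyclic classification in \cite{sharma2025groups}. Intersecting the orders of those groups with orders of the form $p^{a-1}q^2$ or $p^aq$ cuts the list down to a handful of explicit candidates, for example $M\in\{\Z_{p^aq},\Z_{p^{a-1}q^2},A_4,D_{12},\Z_3\times S_3,\dots\}$. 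Each candidate then forces specific small values of $p,q$.

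Finally, we would run GAP~\cite{GAP4} over all non-nilpotent groups of the surviving orders, such as $36,72,100,108,144,200$, and check that none has exactly $13$ cyclic subgroups. The main obstacle is the case $P$ non-normal with $Q\cong\Z_q\times\Z_q$ normal. There $q$ is a priori unbounded by Sylow counting alone, since $n_p(G)$ divides $q^2$. To handle it, we first use that $Q$ contributes $q+1$ subgroups of order $q$, which already gives $q+1+c(P)\le 13$ and so $q\le 7$. That is the step where the bound must be checked carefully, including any extra cyclic subgroups of order $pq$.
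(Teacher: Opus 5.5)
Your strategy is the paper's. You bound the primes using Sylow counting and lower bounds on $c(P)$ and $c(Q)$, use a maximal subgroup with $c(M)\le 11$ from \cite[Proposition 2.4]{11cyclic}, and finish with GAP. You split by which Sylow subgroup is non-normal, while the paper splits by whether the subgroups of order $p$ and $q$ are unique.

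\textbf{The gap.} The step meant to make the list of orders finite fails when the non-normal Sylow subgroup is non-cyclic: you never bound the other prime. If $Q\cong\Z_q\times\Z_q$ is not normal, then $(q+1)+(a+1)\le 13$ bounds only $q$. Your claim that $p$ is small rests on $n_q(G)\le 13$. You have that only for cyclic $Q$, where the Sylow $q$-subgroups are themselves cyclic subgroups and so are counted by $c(G)$. For non-cyclic $Q$ they are not counted. If moreover $P$ is cyclic, $c(P)=a+1$ says nothing about $p$, and for $q=2$ the congruence $n_q(G)\equiv 1\pmod q$ is vacuous. This configuration really occurs: $D_{2p^2}\times\Z_2$ has a non-normal Sylow $2$-subgroup $\Z_2\times\Z_2$ and a cyclic normal Sylow $p$-subgroup for every odd prime $p$. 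It satisfies every inequality you actually establish.

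The mirror case has the same defect for $q$: $P$ non-cyclic and non-normal, $Q$ cyclic and normal, e.g.\ $D_{2q^2}\times\Z_2$. Your sentence ``the analogous bound holds'' does not cover it. By contrast, the case you flag as the main obstacle ($Q\cong\Z_q\times\Z_q$ normal) is harmless: $c(Q)=q+2$ bounds $q$, and then $p+1\le n_p(G)\le q^2$ bounds $p$.

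Your maximal-subgroup paragraph does not close the gap either. The candidates $\Z_{p^aq}$ and $\Z_{p^{a-1}q^2}$ exist for all primes, so they do not force small $p,q$. The paper uses $c(H)\le 11$ only for structure, namely $H\cong\Z_{p^2q^2}$ when $|G|=p^3q^2$ with normal Sylow $q$-subgroup. It handles exactly your missing configuration for $|G|=p^2q^2$ with an extra input, \cite[Lemma 2.2]{11cyclic}.

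\textbf{The fix.} Count subgroups of order $q$ across all Sylow $q$-subgroups, not inside one. Two distinct Sylow subgroups isomorphic to $\Z_q\times\Z_q$ meet in a group of order at most $q$. Any two distinct subgroups of order $q$ inside one of them generate it. Hence the sets of pairs of order-$q$ subgroups contained in the various Sylow $q$-subgroups are pairwise disjoint. Let $s$ be the number of subgroups of order $q$ in $G$; counting the identity, these, and at least $a$ nontrivial cyclic $p$-subgroups gives $1+s+a\le 13$. Therefore
\[
n_q(G)\binom{q+1}{2}\le\binom{s}{2},\qquad s\le 12-a .
\]
This bounds $n_q(G)\in\{p,p^2,p^3\}$, and hence $p$; for instance $n_q(G)\le 15$ when $q=2$. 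The mirror case works the same way: swap $p$ and $q$ when $a=2$. For a non-cyclic $P$ of order $p^3$, use that $P$ is generated by at most three of its cyclic subgroups, and distinct Sylow $p$-subgroups need distinct such generating sets. This bounds $n_p(G)\in\{q,q^2\}$, and hence $q$.

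Only after this is the GAP step over a genuinely finite list, and you should make the resulting bounds explicit. Note that your sample orders $144$ and $150$ are not of the form $p^aq^2$ with $a\in\{2,3\}$.
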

         \begin{proof}
             If $|G|=p^2q^2$, then Sylow $p$ and $q$ subgroups of $G$ are Dedekind. Now, we prove the result by examining the following cases.
  \begin{enumerate}
    \item {\em \textbf{$\mathbf{G}$ has a unique subgroup of orders $\mathbf{p}$ and $\mathbf{q}$.}} In this case Sylow $p$ and $q$ subgroups of $G$ are cyclic. If $n_p(G)=n_q(G)=1$, then $G$ is cyclic, which is a contradiction. Also, by Sylow theorems, $n_p(G), n_q(G)>1$ is not possible. Let us assume that $n_p(G)>1$ and $n_q(G)=1$. Then by Sylow theorem, $n_p(G)\geq {1+p}$ and $n_p(G)\in \{q,q^2\}$. Moreover, $G$ has a unique cyclic subgroup of orders $1, p, q, pq$ and $q^2$.\\
            If $n_p(G)=q$, then $p<q$ and $c(G)\geq {q+5}$. Hence $p\in \{2,3,5\}$ and $q\in \{2,3,5,7\}$. Also $|G|\in \{36,100,196,225,441,1225\}$. If $n_p(G)=q^2$, then either $|G|=36$ or $c(G)>13$. A simple calculation with GAP~\cite{GAP4} shows that no such group of these orders is $13$-cyclic. 
            \item {\em \textbf{$\mathbf{G}$ has a unique subgroup of order $\mathbf{q}$ and at least $\mathbf{p+1}$ subgroups of order $\mathbf{p}$}.} This implies that Sylow $q$-subgroup of $G$ is cyclic. If Sylow $p$ subgroup of $G$ is normal, then $G$ has at least $q+1$ cyclic subgroups of order $q^2$. This implies that $c(G)\geq 1+(p+1)+(q+1)+1$, (here $p+1$ is the number of subgroups of order $p$, $q+1$ is the number of subgroups of order $q^2$, first $1$ denotes the trivial subgroup, and last $1$ is for a subgroup of order $q$). Therefore,  $p+q\leq 9$ and $G\in \{36, 100, 196, 225\}$. By using GAP, no such group of these orders is $12$-cyclic. If Sylow $p$ subgroup is not normal, then by \cite[Lemma 2.2]{11cyclic} $q\leq 11$. Also, by Sylow's theorem, $G$ has at least $2p+2$ cyclic subgroups of order $1,p$, and $p^2$. Thus $2p+2+2\leq c(G)$, here the last $2$ denotes the number of cyclic subgroups of order $q$ and $q^2$. Thus $G\in \{36,100,196,225,441,484,1089\}$ as $c(G)=13$. By using GAP, no such group of these orders is $13$-cyclic. 
            \item {\em \textbf{Subgroups of orders $\mathbf{p}$ and $\mathbf{q}$ are not unique.}} By \cite{frobenius1895verallgemeinerung}, $G$ has at least $p+1$ and $q+1$ subgroups of order $p$ and $q$, respectively. Therefore, $c(G)\geq p+q+3$. Accordingly, $p+q\in \{2,3,4,5,6,7,8,9\}$ and $|G|\in\{36,100,196,225,441\}.$ By GAP~\cite{GAP4}, after analyzing 
 all the groups of order $36,100,196,225$ and $441$, it can be seen that none of them is $13$-cyclic.
 \end{enumerate}
 If $|G|=p^3q^2$, then Sylow $q$-subgroup is Dedekind. We have the following cases.
 \begin{enumerate}
     \item {\em \textbf{Sylow $\mathbf{q}$ subgroup is normal in $\mathbf{G}$.}} Then Sylow $p$-subgroup is not normal in $G$. Also, $G$ contains a maximal subgroup of order $p^2q^2$, say $H$, such that $c(H)\leq 11$ by~\cite[Proposition 2.4]{11cyclic}. Since $c(H)\leq 11$, then $H\cong Z_{p^2q^2}$ and $c(H)=9$ by~\cite[Theorem 1.2, 2.4]{ashrafi2019n} and~\cite[Theorem 1.1]{11cyclic}. First, suppose that the Sylow $p$-subgroup of $G$ is cyclic. Then by the Sylow theorem, $n_p(G)\geq 1+p$ and $n_p(G)\geq q$. This implies that $p,q\in \{2,3\}$ and $|G|=24, 54$. Using GAP, one can check that no such group of the above orders is $13$-cyclic. Next, we assume that Sylow $p$-subgroup of $G$ is not cyclic. Then by \cite[Theorem 1.1]{LowerBoundNumberofCyclicSubgroups} either $p=2$ or $G$ has at least $2p+2$ cyclic subgroups of order $1,p$ and $p^2$. Thus if $p\not =2$, then $2p+2+2\leq c(G)$, here last $2$ denotes the number of cyclic subgroups of order $q^k$, where $k\geq 1$. This implies that $p=2$ or $3$ as $c(G)=13$. If $p=3$, then any Sylow $p$-subgroup $P$ of $G$ is either isomorphic to $\Z_9\times \Z_3$ or $\Z_9\rtimes \Z_3$. Now by counting the cyclic subgroups of $G$, it is easy to see that $c(G)>13$.
     \item {\em \textbf{Sylow $\mathbf{q}$ subgroup is not normal in $\mathbf{G}$.}} Then there are the following cases.  assume that, both the Sylow $p$ and $q$-subgroups of $G$ are cyclic. Then by the Sylow theorem, $n_p(G)\geq 1+p$ and $n_q(G)\geq 1+q$. Therefore $p,q\in \{2,3,5\}$. If both $p$ and $q$ sylow subgroups are non-cyclic, then by~\cite[Theorem 1.1]{meng2024finite}, either $p=2$ or $G$ has at least $2p+2$ cyclic subgroups of order $p^k$ and $q+2$ cyclic subgroups of order $q^k$, where $k\geq 1$. Thus, $|G|\in \{72,108,200,675\}$. By using GAP, one can check that no such group of above orders is $13$-cyclic. Similarly, we can handle the case when one of the Sylow subgroups is cyclic, and the other is non-cyclic. 
 \end{enumerate}
 \end{proof}
 \begin{lemma}
     There does not exist any non-nilpotent group $G$ such that $c(G)=13$ and $|G|=pqr$.
 \end{lemma}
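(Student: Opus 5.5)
Let $|G|=pqr$ with $p<q<r$ and $G$ non-nilpotent. Every Sylow subgroup has prime order, hence is cyclic, so $G$ is metacyclic, in particular solvable (indeed supersolvable). Every cyclic subgroup then has order $1,p,q,r,pq,pr,qr$ or $pqr$, and there are no elements of order $pqr$, since otherwise $G$ would be cyclic and hence nilpotent. The plan is to count $c(G)$ as $1+n_p+n_q+n_r$ plus the numbers of cyclic subgroups of orders $pq,pr,qr$, and to show this sum can never equal $13$ except for a finite list of small orders, which are then eliminated by GAP.

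First, $G$ has a normal Sylow $r$-subgroup $R$: a group of order $pqr$ is solvable, so it has a normal subgroup of prime order, and a standard argument (or the Sylow counts) gives $n_r=1$. Hence $G/R$ has order $pq$, and $G$ has a normal subgroup $N$ of order $qr$ (the Sylow $q$-subgroup of $G/R$ is normal because $q>p$), so $N$ is cyclic or $\Z_r\rtimes\Z_q$. Since $G$ is non-nilpotent, at least one of $n_p,n_q$ exceeds $1$. By Sylow's theorem, $n_q\in\{1,r\}$ with $r\equiv 1\pmod q$ if $n_q=r$, and $n_p\in\{1,q,r,qr\}$ with $n_p\equiv 1\pmod p$. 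The cases are:
\begin{enumerate}
\item $n_p>1$ and $n_q=1$. Then $c(G)\ge 1+n_p+1+1\ge n_p+3$, with the extra contributions from subgroups of orders $qr$, $pq$, $pr$.
\item $n_q=r$. Then $c(G)\ge 1+r+1+n_p$.
\end{enumerate}
In every case I will use $n_p\ge p+1$ or $n_q\ge q+1$ together with $n_p\ge q$ (when it exceeds $1$) to bound the primes.

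Concretely, if $n_q=r$ then $r\equiv1 \pmod q$ forces $r\ge 2q+1\ge 7$, and $1+1+r+n_p\le 13$ gives $r\le 11$. So $(q,r)\in\{(3,7),(5,11),(2,\ast)\}$, but $q>p\ge2$ rules out $q=2$, leaving orders like $2\cdot3\cdot7=42$ and $2\cdot5\cdot11=110$, plus a few with $p=2$ or $3$. If $n_q=1$ then $n_p\in\{q,r,qr\}$. Each $n_p$-value also forces the cyclic subgroups of order $pq$ or $pr$ to be non-normal, hence at least $n_p/\!\cdot$ conjugates of them. I would bound these crudely by $c(G)\ge 3+2n_p/\gcd$, obtaining $q\le 7$ or $r\le 11$. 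This should produce a short explicit list of orders ($30,42,66,70,78,102,110,\dots$, all at most about $2\cdot 5\cdot 11$ or $3\cdot5\cdot 7$). For these orders I would check with GAP~\cite{GAP4} that no non-nilpotent group is $13$-cyclic, just as in the preceding lemmas.

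The main obstacle is making the bound on the number of cyclic subgroups of composite order $pq$, $pr$ tight enough to cut the list down to finitely many orders. I would try first to use that an element of order $p$ with $n_p=qr$ centralizes nothing of order $q$ or $r$, so all $qr$ subgroups of order $p$ contribute without any accompanying composite subgroups. When $n_p=q$ or $r$, instead, each subgroup of order $p$ lies in a unique cyclic subgroup of order $pr$ or $pq$, doubling the count to $2n_p$. Either way $c(G)\ge 3+\min(qr,2q)$, which with $c(G)=13$ gives $q\le 5$ and then $r\le 11$, leaving the GAP check.
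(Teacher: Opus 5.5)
Your case split ($n_q=r$, or $n_q=1$ with $n_p\in\{q,r,qr\}$) is exhaustive, and in the subcases $n_q=r$, $n_p=r$ and $n_p=qr$ your counts do bound all three primes. The gap is in the subcase $n_q=1$, $n_p=q$. Your observation that $N_G(P)=P\times R\cong\Z_{pr}$ correctly gives $c(G)\ge 3+2q$, hence $p=2$ and $q\in\{3,5\}$. The step ``and then $r\le 11$'', however, has no justification, and no lower-bound count can supply one. In this subcase $P$ centralizes $R$, and $R$ centralizes the normal subgroup $Q$ (since $[Q,R]\le Q\cap R=1$). Hence $G\cong(\Z_q\rtimes\Z_p)\times\Z_r$, e.g.\ $S_3\times\Z_r$ or $D_{10}\times\Z_r$, for an \emph{arbitrary} prime $r>q$. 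Such a group has exactly $2q+4$ cyclic subgroups: one each of orders $1,q,r,qr$, plus $q$ of order $p$ and $q$ of order $pr$. This number is independent of $r$, so this infinite family satisfies every inequality you derive, and your final GAP check is not over a finite list of orders.

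What is missing is a structural step that disposes of this family without bounding $r$. The paper does this with a parity argument. If a Hall $\{p,q\}$-subgroup $H$ is normal, then $G\cong H\times R$ with $\gcd(|H|,|R|)=1$. Then $c(G)=c(H)\,c(R)=2c(H)$ is even, so it cannot equal $13$. In the remaining configurations the paper shows all proper subgroups are cyclic and appeals to the classification of minimal non-cyclic groups. In your framework it suffices to add that $G\cong(\Z_q\rtimes\Z_p)\times\Z_r$ forces $c(G)=2(q+2)\neq 13$. With that addition, the other subcases do reduce to a finite list of orders, and your GAP check can finish the proof.
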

 \begin{proof}
     If $|G|=pqr$, where $p<q<r$, then $n_r(G)=1$, i.e., the Sylow $r$-subgroup, $R$ and the Hall subgroup $H_{q,r}$ of order $qr$ are normal in $G$, i.e., $G\cong H_{q,r}\rtimes \mathbb{Z}_p$ and $n_p(G) \in \{q,r,qr\}$, i.e., $n_p(G)\geq q$. If $H_{q,r}\cong \mathbb{Z}_r \rtimes \mathbb{Z}_q$, then $c(H_{q,r})=r+2$. Now consider a Hall $\{p,q\}$-subgroup, $H_{p,q}$ of $G$. If it is normal in $G$, then $G\cong H_{p,q}\times R$. This contradicts that $c(G)=13$ is a prime number. Thus, any Hall $\{p,q\}$-subgroup is not normal in $G$. Now, counting the cyclic subgroups of $G$, we get $$13=c(G)\geq n_p(G)+(r+2), \mbox{ i.e., }q+r\leq 11.$$
This leaves very few choices for $(p,q,r)$ and can be checked exhaustively in GAP; no such group has exactly $13$ cyclic subgroups. Thus $H_{q,r}\cong \mathbb{Z}_{qr}$ and $c(H_{q,r})=4$. If $n_p(G)=r$ or $qr$, we get $r+4\leq 13$ that is $r\leq 7$. Again, using GAP, one can check that no such group exists. So, we are left with $n_p(G)=q$. Now, we try to evaluate $n_q(G)$. Let $T$ be the unique subgroup of order $q$ in $H_{q,r}$. If $T'$ is any other subgroup of order $q$ in $G$, then $|T'\cap H_{q,r}|=1$ and $T'H_{q,r}=G$ (as $H_{q,r}$ is a normal, maximal subgroup of $G$). Thus $|G|=|T'H_{qr}|=q^2r>pqr$, a contradiction. Thus $G$ has a unique subgroup of order $q$, i.e., $n_q(G)=1$. Now, we consider a Hall $\{p,r\}$-subgroup $H_{p,r}$ of $G$. It is either cyclic or isomorphic to $\mathbb{Z}_r \rtimes \mathbb{Z}_p$. In the latter case, $H_{p,r}$ contains $r>q$ subgroups of order $p$, a contradiction. Thus $H_{p,r}$ is cyclic. Similarly, if $H_{p,q}$ is a Hall $\{p,q\}$-subgroup of $G$ and $H_{p,q} \cong \mathbb{Z}_q\rtimes \mathbb{Z}_p$, then $H_{p,q}$ contains all the $q$ many subgroups of order $p$ in $G$. Let $P$ be the subgroup generated by all elements of order $p$ in $G$. Clearly $P$ is a characteristic subgroup of $G$ and $\mathbb{Z}_p \subsetneq P \subseteq H_{p,q}$, i.e., $H_{p,q}=P\lhd G$. However, we have proved earlier that any Hall $\{p,q\}$-subgroup is not normal in $G$. Thus $H_{p,q}$ is also a cyclic group. Hence, all proper subgroups of $G$ are cyclic, i.e., $G$ is a finite minimal non-cyclic group. Such groups are classified in \cite{miller1906number, redei1947schiefe}, and their orders have at most two distinct prime divisors, which is a contradiction.
 \end{proof}
 \begin{lemma}
      There does not exist any non-nilpotent group $G$ such that $c(G)=13$ and $|G|=p^2qr$.
 \end{lemma}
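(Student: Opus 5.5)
We follow the pattern of the previous lemmas: use the structure of groups of order $p^2qr$ to get a lower bound on $c(G)$ that forces the primes to be small, and then settle the remaining orders with GAP. Let $|G|=p^2qr$ with $G$ non-nilpotent and $c(G)=13$.

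\textbf{Step 1: solvability and a normal Hall subgroup.} Groups of order $p^2qr$ are solvable except when $|G|=60$. For $|G|=60$ the only non-solvable group is $A_5$, and a direct count gives $c(A_5)=1+15+10+6=32\neq 13$. So assume $G$ is solvable.

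\textbf{Step 2: easy bounds.} Count the trivial subgroup together with at least one cyclic subgroup for each of the orders $p$, $q$, $r$, $p^2$ or $pq$, $pr$, $qr$ that actually occurs. This gives $c(G)$ of roughly $8$ before any non-normality is used. Next, since $G$ is non-nilpotent, some Sylow subgroup is not normal.
\begin{itemize}
\item If $n_s(G)>1$ for a prime $s\in\{q,r\}$, the subgroups of order $s$ alone contribute at least $s+1$, and $n_s(G)$ must divide $p^2t$, where $t$ is the other prime.
\item If the Sylow $p$-subgroup is not normal, it contributes at least $p+1$ subgroups (cyclic case) or at least $p+2$ (non-cyclic case, via \cite[Theorem 1.1]{LowerBoundNumberofCyclicSubgroups}).
\end{itemize}

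\textbf{Step 3: excluding normal Hall complements.} Exactly as in the $pqr$ lemma, use Lemma~\ref{Nil}-type primality. If $G\cong H\times K$ with $\gcd(|H|,|K|)=1$, then $c(G)=c(H)c(K)$ by \cite[Proposition 2.2]{CyclicityDegree}. Since $13$ is prime, one factor would have $c=1$, which is impossible for a nontrivial factor. Hence no Sylow subgroup can be a direct factor. This forces at least two of the Sylow subgroups to be non-normal, or one to be non-normal with its normalizer constraints interlinked. Adding the corresponding contributions $(s+1)+(t+1)$ on top of the base count gives an inequality such as $p+q+r\le 11$ or $q+r\le 9$.

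\textbf{Step 4: finite list and maximal subgroups.} Step 3 leaves a short list of orders, such as $60$, $84$, $90$, $126$, $132$, $150$, $140$, $156$, and similar. As an independent check, a maximal subgroup $M$ of index a prime satisfies $c(M)\le 11$ by \cite[Proposition 2.4]{11cyclic}. Therefore $M$ of order $pqr$ or $p^2q$ must appear in the classifications \cite[Theorem 1.2, 2.4]{ashrafi2019n} and \cite[Theorem 1.1]{11cyclic}, which further restricts the primes. Finish by checking all groups of the remaining orders in GAP and finding none that is $13$-cyclic.

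\textbf{Main obstacle.} The hardest part is Step 3: turning "$13$ is prime" into two non-normal Sylow subgroups in every configuration. The difficult configuration is a normal Sylow $r$-subgroup with $G=R\rtimes(\text{order }p^2q)$. There I would first try arguing with the centralizer $C_G(R)$: if $C_G(R)$ contains a full Hall complement, $R$ splits off as a direct factor, contradicting primality. Otherwise some prime acts nontrivially on $R$, which yields at least $r$ conjugate subgroups and the bound $r\le 11$.
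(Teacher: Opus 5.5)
Your Step 1 and the primality observation are correct. $A_5$ is the only non-solvable group of order $p^2qr$, and $c(A_5)=32$. Also $c(H\times K)=c(H)c(K)$ for coprime $|H|,|K|$ rules out any nontrivial coprime direct decomposition of $G$.

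The genuine gap is Step 3, where the whole proof lives. You never derive a bound on all three primes, so the ``short list of orders'' in Step 4 does not follow from anything, and the GAP check does not close the argument. Your dichotomy ``at least two Sylow subgroups non-normal, or one non-normal with interlinked normalizer constraints'' is not a usable statement, and its first branch is not forced. The configuration $Q,R\trianglelefteq G$ with only $P$ non-normal is compatible with $G$ having no coprime direct factor, e.g.\ $(\Z_7\rtimes\Z_3)\times(\Z_{13}\rtimes\Z_3)$. So you cannot add $(s+1)+(t+1)$ to a base count, and ``$p+q+r\le 11$ or $q+r\le 9$'' is not derived in any configuration. Your obstacle paragraph for $R\trianglelefteq G$ bounds only $r$ and leaves $p,q$ free. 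The maximal-subgroup check cannot fill the hole either: a maximal subgroup of prime index may simply be $\Z_{pqr}$ or $\Z_{p^2q}$, which restricts nothing.

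The paper uses maximal subgroups more sharply. When all Sylow subgroups are cyclic, $G$ is supersolvable, so it has maximal subgroups of each order $p^2q$, $p^2r$, $pqr$. These cannot all be cyclic, since minimal non-cyclic groups have at most two prime divisors. A non-cyclic one with $c\le 11$ then lies in an explicit short list. The case $P\cong\Z_p\times\Z_p$ is treated separately using $c(P)=p+2$, so $p\le 7$.

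What your route is missing is a systematic case split on which Sylow subgroups are normal, combined with a coprime-action fact. If $S$ is a normal Sylow $s$-subgroup and $T$ a Sylow $t$-subgroup, then $N_S(T)=C_S(T)$. Hence if $T$ acts nontrivially on $S$, then $s\mid n_t(G)$.
\begin{itemize}
\item Primality of $13$ forces each normal Sylow subgroup to be acted on nontrivially by some other Sylow subgroup; otherwise a Hall complement centralizes it and it splits off.
\item So every prime is bounded through some $n_t(G)$ or some conjugacy class of subgroups of order $t$. The counts for different orders can then be added without overlap.
\end{itemize}
For example, suppose $Q,R\trianglelefteq G$. Then $P$ must act nontrivially on both, since otherwise one of $Q,R$ is central and a direct factor. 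Hence $C_{QR}(P)=1$, $n_p(G)=qr$, $p\mid q-1$ and $p\mid r-1$, so $qr\ge 15$.
\begin{itemize}
\item If $P$ is cyclic, this gives $c(G)\ge qr+4\ge 19$.
\item If $P\cong\Z_p\times\Z_p$, any subgroup of order $p$ other than the two kernels of the actions on $Q$ and $R$ has $qr$ conjugates, so $c(G)\ge qr+3\ge 18$.
\end{itemize}
Only after such computations in every configuration do you get a genuinely finite list of orders to hand to GAP.
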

 \begin{proof}
    We will discuss the proof case by case.
    \begin{enumerate}
        \item \textbf{All the Sylow subgroups of $\mathbf{G}$ are cyclic.} Then by \cite[Theorem 1.2]{bray1982between}, $G$ is metacyclic and hence supersolvable. Thus $G$ contains maximal subgroups of orders $p^2q, p^2r$ and $pqr$. By \cite[Proposition 2.4]{11cyclic}, any maximal subgroup of these orders has at most $11$ cyclic subgroups. Moreover, all these maximal subgroups are either cyclic or they are isomorphic to $\Z_3\rtimes \Z_4, \Z_5\rtimes \Z_4, \Z_p\times S_3, D_{12},Dic_7,\Z_7\rtimes \Z_9$ and $\Z_3\times S_3$ by~\cite[Theorem 1.2, 2.4]{ashrafi2019n} and~\cite[Theorem 1.1]{11cyclic}. Also, all these maximal Subgroups of $G$ can not be cyclic, otherwise $G$ will become minimal non-cyclic, which is not possible as the order of any non-cyclic minimal non-cyclic group has at most two prime divisors by~\cite{miller1906number, redei1947schiefe}. Let $H$ be a non-cyclic maximal subgroup of $G$. Now we discuss all the possibilities of $H$ separately.\\
        If $H\cong Dic_7, \Z_7\rtimes \Z_9$ and $\Z_3\times S_3$, then $c(H)=11$. Without loss of generality, assume that $|H|=p^2q$. Since $c(G)=13$, then the Sylow $r$-subgroup of $G$ is unique. This implies that $G$ can not have a unique subgroup of order $p^2q$ as $c(G)=13$. By simple calculation, we can see that no such group is $13$-cyclic.\\
        Further, assume that $H\cong \Z_p\times S_3$ or $D_{12}$. If $H\cong \Z_p\times S_3$, then $c(H)=10$. Now, the following possibilities exist.\\
        If $|G|=6p^2$, where $p\geq 5$ then $G$ has a unique cyclic subgroup of order $p^2$. Also, $G$ can not have a normal subgroup of order $6$, otherwise $G\cong \Z_{p^2}\times H_6$ and $c(G)\not =13$, where $H_6$ denotes the subgroup of order $6$ in $G$. This is a contradiction. If other subgroups of order $6$ are cyclic, then they are at least two. Thus $G$ has at least $c(H)+1+2=10+1+2=13$ cyclic subgroups, where $1$ denotes a cyclic subgroup of order $p^2$ and $2$ denotes $G$ has two cyclic subgroups of order $6$. By equation~\ref{eq1} one can easily check that this is not possible for a group of order $6p^2$. If $G$ has at least two subgroups isomorphic to $S_3$, then by taking different possibilities on their intersection, one can see that $G$ is not $13$-cyclic.\\
        If $|G|=12p$, then also $G$ has a unique subgroup of order $p$, as $p\geq 5$. Since the Sylow $2$-subgroup of $G$ is cyclic, it can not be unique. In this case, the number of cyclic subgroups of $G$ is $c(H)+3=10+3$, where $3$ denotes the number of cyclic subgroups of order $4$. By equation~\ref{eq1}, one can check that such a group can not have $13$ cyclic subgroups.\\ If $|G|=18p$, then with similar arguments, one can check that no group of order $18p$ is $13$-cyclic.\\ 
If $H\cong D_{12}$, then $c(H)=10$ and $|G|=12p$. Thus $G$ has a unique subgroup of order $p$, as $p\geq 5$ and $H$ is also a unique subgroup of $G$ of order $12$. This implies that $G$ has at least $c(H)+3=10+3$ cyclic subgroups, where $3$ denotes the cyclic subgroup of order $p, 3p$, and $6p$. By equation~\ref{eq1}, we can check that this group is not $13$-cyclic.\\
 If $H\cong \Z_5\rtimes \Z_4$, then $c(H)=9$ and $|G|=20p$. In this case, either $p=3$ or $G$ has a unique subgroup of order $p$. Also, $H$ is a unique subgroup of $G$ of order $20$. Therefore, $G$ has at least $c(H)+3+1=9+3+1$ cyclic subgroups, where $3$ denotes cyclic subgroups of order $p, 2p$ and $5p$ and $1$ denotes the cyclic subgroup of order $10p$. By counting all the elements of $G$, we can see that this group can not be $13$-cyclic.\\
 If $H\cong \Z_3\rtimes \Z_4$, then we can check that no such group is $13$-cyclic by using a similar argument.
\item \textbf{All the Sylow subgroups of $\mathbf{G}$ are not cyclic.} The Sylow $p$-subgroup of $G$ is isomorphic to $\Z_p\times \Z_p$. Since $c(\Z_p\times \Z_p)=p+2$, then $p\leq 7$. First, we assume that Sylow $p$-subgroup of $G$ is not unique. Then, for $p=3,5$ and $7$, we can easily check that, $c(G)>13$. If $p=2$, then $G$ has at least three Sylow $2$-subgroups isomorphic to $\Z_2\times \Z_2$, and $G$ has at least $7$ subgroups of order $2$. Corresponding to this, first assume that both Sylow $q$ and $r$-subgroups of $G$ are not normal. Then, by Sylow theorem, we have $7+1+(1+q)+(1+r)=c(G)\leq 13$, where $7$ denotes the number of cyclic subgroups of order $2$, $1$ denotes the trivial subgroup. This implies that $q+r\leq 3$, which is a contradiction. Further, assume that Sylow $q$-subgroup is normal and Sylow $r$-subgroup is not normal. Then, by Sylow theorem, we get $7+2+(1+r)=c(G)\leq 13$, where $2$ denotes the trivial subgroup and subgroup of order $q$. This shows that $r=3$. Again, by Sylow theorem, $G$ has at least $q$ subgroups of order $2$, which implies that $q=5$ or $7$. By using GAP, one can check that no such group is $13$-cyclic. Finally, assume that the Sylow $q$ and $r$-subgroups of $G$ are normal. Then $G$ also has a unique cyclic subgroup of order $qr$. Now, if $G$ has a cyclic subgroup of order  $2q$ and $2r$, then $7+3+3\leq c(G)=13$, where $7$ denotes the number of cyclic subgroups of order $2$, $3$ denotes the number of cyclic subgroups of order $1,q$ and $r$, and last $3$ denotes the number of cyclic subgroups of order $2q.2r$ and $qr$. By equation~\ref{eq1}, we can see such a group does not exist. Further, assume that a subgroup of order $2q$ is cyclic and a subgroup of order $2r$ is non-cyclic in $G$. Then $G$ contains at least $r$ subgroups of order $2$, that is $r\leq 7$. Now, either $G$ contains a unique cyclic subgroup of order $2q$, which implies that $G$ contains a cyclic subgroup of order $2qr$, or $G$ has at least $2$ cyclic subgroups of order $2q$. In both cases, by counting the number of elements of $G$, and by using the fact $c(G)=13$, we can see that this case is not possible. Similarly, we can show that if $G$ has no cyclic subgroups of order $2q$ and $2r$, then $q,r\leq 7$. By GAP, we can check that no such group of these orders for the obtained choices of $q$ and $r$ is $13$-cyclic.\\
Now assume that $G$ has a unique Sylow $p$-subgroup, and the Sylow $p$-subgroup $P$ is isomorphic to $\Z_7\times \Z_7$. Then $c(P)=9$ and one of the Sylow $q$, $r$ subgroups is normal in $G$, and one is not normal. Without loss of generality, assume that Sylow $q$-subgroup is not normal in $G$. Then $G$ has at least $9+(1+q)+1$ many cyclic subgroups, where $9$ denotes the number of cyclic subgroups of orders $1$ and $p$, $1+q$ denotes the number of subgroups of order $q$, and $1$ denotes a cyclic subgroup of order $r$. By using the fact $c(G)=13$, we get $q=2$. Now, by counting the elements of $G$, we can see that no such group is $13$-cyclic. Similarly, it can be shown that for $p=2,3$ or $5$ leads to a contradiction. We omit the details of calculations for brevity. Hence, the result holds.
    \end{enumerate}
 \end{proof}
 After combining the Lemmas $3.2,3.3,3.4,3.5,3.6$ and $3.7$, we can fully classify all the groups having $13$ cyclic subgroups. The result is as follows.
 \begin{theorem}
     A finite group $G$ is $13$-cyclic if and only if $G\cong \Z_{11}\times \Z_{11},\Z_{p^{12}},D_{22},\\
    SL(2,3),Q_{32},\Z_{11}\rtimes \Z_5, \Z_7\rtimes \Z_8, \Z_5\rtimes \Z_{16}, \Z_3\rtimes \Z_{32}$ or $\Z_7\rtimes \Z_{27}$, where $p$ is a prime number.
 \end{theorem}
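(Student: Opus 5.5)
The theorem has two directions, and I would treat them separately.

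For the ``if'' direction, I would check that each listed group has exactly $13$ cyclic subgroups. I would do this by direct counting, with equation~\eqref{eq1} as a cross-check.
\begin{itemize}
\item $\Z_{p^{12}}$ has one cyclic subgroup for each of the $d(p^{12})=13$ divisors of its order.
\item $\Z_{11}\times\Z_{11}$ has the trivial subgroup and $12$ subgroups of order $11$.
\item $D_{22}$ has the trivial subgroup, $11$ reflections of order $2$, and one rotation subgroup of order $11$.
\item $\Z_{11}\rtimes\Z_5$ has $1+1+11$ cyclic subgroups (trivial, order $11$, and the eleven Sylow $5$-subgroups).
\item $Q_{32}$ has one cyclic subgroup each of orders $1,2,4,8,16$, plus eight further cyclic subgroups of order $4$ coming from the non-central elements outside $\langle x\rangle\cong\Z_{16}$.
\item $SL(2,3)$ has the trivial subgroup, $1$ subgroup of order $2$, $3$ of order $4$, $4$ of order $3$ and $4$ of order $6$.
\item For the semidirect products $\Z_q\rtimes\Z_{p^k}$ with faithful-enough action (namely $\Z_7\rtimes\Z_8$, $\Z_7\rtimes\Z_{27}$, $\Z_5\rtimes\Z_{16}$, $\Z_3\rtimes\Z_{32}$), I would count by hand. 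There are $q$ conjugate Sylow $p$-subgroups, each cyclic of order $p^k$. The kernel $\Z_{p^j}$ of the action is central, so these Sylows share their subgroups of order up to $p^j$ and give $q$ distinct subgroups at each order $p^i$ with $i>j$. The subgroups of order $qp^i$ with $i\le j$ are cyclic and unique. Adding these up gives $13$ in each case, for example $\Z_3\rtimes\Z_{32}$ gives $5+2+2\cdot 3=13$. GAP can confirm these counts.
\end{itemize}

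For the ``only if'' direction, suppose $c(G)=13$. By Richard's theorem, $|G|$ has at most three prime divisors and has one of the forms listed at the start of the section. If $G$ is nilpotent, Lemma~\ref{Nil} and Lemma 3.2 give $\Z_{p^{12}}$, $\Z_{11}\times\Z_{11}$ or $Q_{32}$. If $G$ is non-nilpotent, then $G$ is not a $p$-group, so $|G|$ is one of $pq$, $p^2q$, $p^aq$ with $a\le5$, $p^2q^2$, $p^3q^2$, $pqr$ or $p^2qr$. These orders are handled exhaustively by Lemmas 3.3, 3.4, 3.5, 3.6 and 3.7.

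The main obstacle is making sure this list of orders is complete, for instance that $p^2q^3$ is covered by the symmetry $p\leftrightarrow q$ in $p^3q^2$. The key step is the divisor bound $d(|G|)\le 13$. It excludes $p^6q$, since $d(p^6q)=14$, and it excludes $p^3qr$ and $p^2q^2r$, since their divisor counts are $16$ and $18$. I would spell this bound out carefully. Once that is done, the union of the lemmas' outputs is exactly the stated list.
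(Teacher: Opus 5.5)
Your ``only if'' direction is exactly the paper's proof. It assembles Lemmas 3.2--3.7 over the list of orders forced by Richards' bound, and your divisor-count check of that list, including the $p^2q^3\leftrightarrow p^3q^2$ symmetry, is correct. The paper leaves the ``if'' direction to the GAP computations inside the lemmas; your hand verification of it is correct for $\Z_{p^{12}}$, $\Z_{11}\times\Z_{11}$, $D_{22}$, $\Z_{11}\rtimes\Z_5$, $Q_{32}$ and $SL(2,3)$.

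One correction is needed in the semidirect-product count. Your description gives $2(j+1)+q(k-j)$ cyclic subgroups. For $\Z_5\rtimes\Z_{16}$ this is $22-3j$, which equals $13$ only for $j=3$, i.e.\ for the inversion action with kernel $\Z_8$ (SmallGroup$(80,1)$). The more faithful action of order $4$ has kernel $\Z_4$ and gives $16$, so ``faithful-enough'' points the wrong way for this group. Also, your breakdown for $\Z_3\rtimes\Z_{32}$ should read $5+3+5$: five subgroups inside the kernel $\Z_{16}$, three Sylow $2$-subgroups of order $32$, and five cyclic subgroups of orders $3\cdot 2^i$ for $0\le i\le 4$.
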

 
 \section*{Acknowledgements} 
 The author would like to acknowledge the support of the IISER Berhampur institute post-doctoral fellowship during this work.

\subsection*{Data Availability Statements}
Data sharing is not applicable to this article as no datasets were generated or analyzed during the current study.

\bibliographystyle{abbrv}
\bibliography{refs.bib} 
\end{document}